\numberwithin{equation}{section}
\newtheorem{theorem}[equation]{Theorem}
\newtheorem{lemma}[equation]{Lemma}
\theoremstyle{definition}
\newtheorem{definition}[equation]{\textsc{Definition}}
\theoremstyle{remark}
\newcommand{\mailto}[1]{\href{mailto:#1}{\nolinkurl{#1}}}
\def\Xint#1{\mathchoice
    {\XXint\displaystyle\textstyle{#1}}%
    {\XXint\textstyle\scriptstyle{#1}}%
    {\XXint\scriptstyle\scriptscriptstyle{#1}}%
    {\XXint\scriptscriptstyle\scriptscriptstyle{#1}}%
    \!\int}
\def\XXint#1#2#3{{\setbox0=\hbox{$#1{#2#3}{\int}$}
      \vcenter{\hbox{$#2#3$}}\kern-.5\wd0}}
\newcommand{\vint}{\Xint-}
\newcommand{\cao}{C^{\infty}_0(\Omega)}
\newcommand{\vp}{\varphi}
\newcommand{\vf}{\varphi}
\newcommand{\vep}{\varepsilon}
\renewcommand{\O}{\Omega}
\newcommand{\R}{\mathbf{R}}
\newcommand{\norm}[1]{\left\| #1 \right\|}
\newcommand{\abs}[1]{\lvert #1 \rvert}
\renewcommand{\l}{\left}
\renewcommand{\r}{\right}
\newcommand{\spt}{\operatorname{spt}}
\begin{document}
\pagenumbering{arabic}

\title{The $p$-Laplacian with respect to measures}

\author{
 Anna Tuhola-Kujanpää\thanks{
 Postal address:
 Department of Mathematics and Statistics,
 University of Jyväskylä,
 PO Box 35, 00014 University of Jyväskylä, Finland.
 Tel: +358 14 260 2789.
 URL: \url{https://www.jyu.fi/science/laitokset/maths/henkilokunta/tuhola}
 Email: \protect\mailto{aptuhola@gmail.com}}
 \and
 Harri Varpanen\thanks{
 Postal address:
 Department of Mathematics and Systems Analysis,
 Aalto University,
 PO Box 11100, 00076 Aalto, Finland.
 Tel: +358 9 4702 3069.
 URL: \url{http://math.aalto.fi/en/people/harri.varpanen} \quad
 Email: \protect\mailto{harri.varpanen@aalto.fi}}
}

\date{\today}

\maketitle

\begin{abstract}
We introduce a definition for the $p$-Laplace operator with respect to
positive and finite Borel measures that satisfy an Adams-type embedding
condition. 
\end{abstract}

{\bf Keywords:} $p$-Laplacian, generalized solutions, nonlinear eigenvalue
problems, analysis on fractals. {\bf MSC 2010:} 35J92, 35P30, 35D99, 35B65

\section{Introduction}
The $p$-Laplace operator $\Delta_p u = \operatorname{div}(\abs{\nabla u}^{p-2}\nabla u)$, $1< p < \infty$, is a natural nonlinear generalization of the
Laplacian in that the equation $\Delta_p u = 0$ is the
Euler-Lagrange equation for minimizing the $p$-energy functional
\[
 I(u) = \int_\O \abs{\nabla u}^p\,dx.
\]
The operator also serves as a prototype for more general classes of
nonlinear operators with $p$-growth. The elliptic theory for a wide class
of such operators is developed in \cite{HKM:book},
and the parabolic theory in \cite{DiBen:book}.

During the last few decades the study of partial differential operators on
fractals has gained popularity (see e.g. \cite{Kigami}), and indeed also
the $p$-Laplacian has been studied on Sierpinski gasket type fractals in
\cite{HerPeiStri} and \cite{StriWong}.

The purpose of our work is to introduce a related theme that extends
the linear case studied in \cite{Hu}. 
We consider a positive and finite Borel
measure $\mu$ supported on an open and bounded set $\O \subset \R^n$,
$n \ge 2$. The measure is assumed to satisfy the embedding condition
\begin{equation}\label{eq:ehto}
  \norm \vf_{L^q(\O,\mu)} \leq C \norm{\nabla \vf}_{L^p(\O)}
\end{equation}
for each $\vf \in \cao$, where
\begin{equation}\label{eq:ehto2}
1 < p \le n, \quad p < q \le \frac{np}{n-p},
\end{equation}
and $C= C(n,p,q,\O) > 0$.
This enables us to give a distributional
definition for the $p$-Laplacian with respect to the measure $\mu$ and to
study the $p$-Laplacian eigenvalue problem (see e.g. \cite{KL} and
references therein) in the measure setting.

In the case where a fractal is represented by its natural measure
our definition provides a method for studying the $p$-Laplacian with respect
to the fractal without employing any self-similar structure. It 
should, however, be emphasized that our work is a different theme in comparison
with \cite{HerPeiStri} and \cite{StriWong}.

\section{Preliminaries}
Our standing assumption throughout the
paper is \eqref{eq:ehto}.

\begin{definition}
We define the \emph{$(p,\mu)$-Laplace operator} 
$\Delta_{p,\mu}$ from $L^{q'}(\O,\mu)$ to $W_0^{1,p}(\O)$ by saying that 
\begin{equation}\label{eq:formal_mu}
  -\Delta_{p,\mu}u = f
\end{equation}
if and only if
\begin{equation}\label{eq:heikko}
  \int_\O |\nabla u|^{p-2}\nabla u \cdot \nabla \vf \, dx
= \int_\O \vf f\, d\mu
\end{equation}
for each $\vf \in C^\infty_0(\O)$. In this case, given $f \in L^{q'}(\O,\mu)$,
we also say that $u \in W^{1,p}_0(\O)$ is a \emph{solution} to $-\Delta_{p,\mu}u = f$.
\end{definition}

Here the measure $f\mu$ defines a bounded linear functional from
$W^{1,p}_0(\O)$ to $\R$, because
H\"older's inequality along with \eqref{eq:ehto} and $f \in L^{q'}(\O,\mu)$
yields
\[
\int_\O \vp f\ d\mu \le \left(\int_\O |\vp|^q \ d\mu\right)^\frac{1}{q}
\left(\int_\O |f|^{q'} \ d\mu \right)^\frac{1}{q'} \le \norm{f}_{L^{q'}(\O,\mu)}
\norm{\nabla \vp}_p < \infty.
\]
Thus a unique solution to \eqref{eq:formal_mu} always exists (see \cite[Chap. 2]{Mikkonen}), and the class of test functions can be extended to $W^{1,p}_0(\O)$
via a standard density argument (e.g. Lemma 3.11 in \cite{HKM:book}).

\begin{definition}
We say that $\lambda \in \R$ is a \emph{$(p,\mu)$-eigenvalue} if there is
a nonzero $u\in W^{1,p}_0(\O)$ such that 
\begin{equation}\label{eq}
  -\Delta_{p,\mu} u = \lambda |u|^{p-2}u.
\end{equation}
The function $u$ is called a \emph{$(p,\mu)$-eigenfunction}.
\end{definition}

Our emphasis will be on the $(p,\mu)$-eigenvalue problem, and the rest of this
paper is organized as follows.
We start in Section \ref{sec:density} by studying the relation between the
embedding \eqref{eq:ehto} and the Hausdorff dimension of the set supporting
the measure $\mu$. Section \ref{sec:eigenex} addresses positivity of
eigenvalues and existence of the first eigenfunction. Our main result,
Hölder continuity of eigenfunctions in the case $p<n$, is proved
in Section \ref{sec:continuity}.
Section \ref{sec:first} discusses the first eigenfunction, and
Section \ref{sec:n} provides a counterexample for continuity in the $p=n$ case.
Finally, Section \ref{sec:conclusions} concludes the paper.

\section{Growth of the measure} \label{sec:density}
\begin{lemma}\label{the:density}
The embedding \eqref{eq:ehto} implies
the growth condition
\begin{equation}\label{eq:mitta}
\mu(B(x_0,r))\leq C(n,p,q) r^{\frac{q}{p}(n-p)}
\end{equation}
for each $B(x_0,r) \subset \O$ such that $B(x_0,2r)\subset \O$.
Conversely, \eqref{eq:ehto} follows if \eqref{eq:mitta} holds for all $B(x_0,r) \subset \R^n$.
\end{lemma}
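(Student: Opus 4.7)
For the forward implication, the plan is a standard test-function argument. Fix a ball $B(x_0,r)$ with $B(x_0,2r)\subset\O$ and choose a cutoff $\vp\in C^\infty_0(\O)$ with $\vp\equiv 1$ on $B(x_0,r)$, $\spt\vp\subset B(x_0,2r)$, $0\le\vp\le 1$ and $|\nabla\vp|\le C/r$. Plugging $\vp$ into \eqref{eq:ehto} bounds the left-hand side from below by $\mu(B(x_0,r))^{1/q}$ and bounds the right-hand side from above by $C r^{-1}|B(x_0,2r)|^{1/p}=C(n,p)\,r^{(n-p)/p}$. Raising to the $q$-th power yields \eqref{eq:mitta}. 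No delicate estimate is needed here; the only point to check is that the constant depends only on $n,p,q$ and not on $x_0$ or $r$, which is immediate from the scale-invariant construction of $\vp$.

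For the converse, the plan is to pass through the Riesz potential. Every $\vp\in C^\infty_0(\R^n)$ satisfies the pointwise bound
\begin{equation*}
 |\vp(x)|\le c(n)\int_{\R^n}\frac{|\nabla\vp(y)|}{|x-y|^{n-1}}\dd y = c(n)\,I_1(|\nabla\vp|)(x),
\end{equation*}
where $I_1$ is the Riesz potential of order $1$. Hence it suffices to show that under the growth hypothesis \eqref{eq:mitta} (extended to all balls in $\R^n$) the operator $I_1$ maps $L^p(\R^n)$ into $L^q(\R^n,\mu)$ boundedly. Precisely this is the content of Adams' trace inequality for Riesz potentials: if $\mu(B(x,r))\le A\,r^s$ for every ball with $s>0$ and $1<p<n$, then $\|I_1 f\|_{L^q(\mu)}\le C\|f\|_{L^p}$ whenever $q\ge p$ and $s=q(n-p)/p$. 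Our parameters \eqref{eq:ehto2} lie exactly in this admissible range ($p<q\le np/(n-p)$ corresponds to $n-p\le s\le n$), so Adams' theorem applies directly with $f=|\nabla\vp|$ and yields \eqref{eq:ehto}.

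The principal obstacle is the converse direction: producing an $L^p$-to-$L^q(\mu)$ bound for a nonlocal operator from a purely local density condition on $\mu$. Carrying this out from scratch would require the classical machinery of dyadic decomposition of $I_1 f$ combined with Hedberg's trick (splitting the integral at a radius depending on the maximal function of $f$) to interpolate against the measure. Rather than reproduce the argument, the natural course is to cite Adams' trace theorem. The forward direction, by contrast, is a one-line cutoff computation and only serves to show that the admissible exponent in \eqref{eq:mitta} is sharp with respect to the embedding.
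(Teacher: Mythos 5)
Your proposal is correct and follows essentially the same route as the paper: a scale-invariant cutoff test function for the forward implication, and a citation of Adams' trace inequality for Riesz potentials for the converse. The only cosmetic difference is that in the forward step the paper first applies H\"older's inequality to $\int_{B(x_0,r)}\vf\,d\mu$ and then cancels a factor of $\mu(B(x_0,r))^{1/q'}$, whereas you observe directly that $\mu(B(x_0,r))^{1/q}\le\norm{\vf}_{L^q(\O,\mu)}$ since $\vf\equiv1$ on $B(x_0,r)$ --- both manipulations yield the identical bound $\mu(B(x_0,r))^{1/q}\le C(n,p)\,r^{(n-p)/p}$.
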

\begin{proof}
To show that \eqref{eq:ehto} implies \eqref{eq:mitta},
choose a
nonnegative cut-off function $\vf\in C_0^\infty(B(x_0,2r))$ such that
$\vf = 1$ in $B(x_0,r)$ and $|\nabla \vf| \le C/r$ in $B(x_0,2r)$. Then
\begin{align*}
&\mu(B(x_0,r))
= \int_{B(x_0,r)}\vf \, d\mu   
\leq \mu(B(x_0,r))^\frac{1}{q'}\l( \int_{B(x_0,2r)}|\vf|^q \, d\mu \r)^\frac{1}{q}\\
&\leq \mu(B(x_0,r))^\frac{1}{q'}\l( \int_{B(x_0,2r)}|\nabla\vf|^p \, dx
\r)^\frac{1}{p}
\leq C(n,p)\mu(B(x_0,r))^\frac{1}{q'} r^\frac{n-p}{p},
\end{align*}
i.e.
\[
\mu(B(x_0,r))^{1-\frac{1}{q'}} \leq C(n,p)r^{\frac{1}{p}(n-p)}.
\]
The converse follows readily from Adams' inequality, see e.g.
\cite[Cor. 1.93]{MalyZiemer}.
\end{proof}
Lemma \ref{the:density} motivates the condition $q \le np/(n-p)$
in \eqref{eq:ehto2}, because $\O \subset \R^n$. Indeed, the exponent $qp^{-1}(n-p)$ in \eqref{eq:mitta}
is in direct correspondence to the Hausdorff dimension of the set supporting
the measure:

\begin{theorem}\label{thm:hausdorff}
Let $E\subset \O$ be a Borel set, and denote $s = qp^{-1}(n-p)$.
\begin{itemize}
\item[(i)] If $\mathcal{H}^s(E)> 0$, then there exists a Radon measure
$\mu$ supported on $E$ that satisfies the condition \eqref{eq:ehto}.
Moreover, if $E$ is a self-similar set satisfying the open set
condition (see \eqref{eq:open} in the Appendix) and if $\dim_\mathcal{H}(E)=  s$,
then the natural self-similar measure $\mu_E$ satisfies the condition
\eqref{eq:ehto}.
\item[(ii)] If $\mu$ is such that \eqref{eq:ehto} holds
and if $\mu(\O \setminus E)= 0$, then $\dim_\mathcal{H}(E) \geq s$.
\end{itemize}
\end{theorem}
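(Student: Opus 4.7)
The plan is to reduce both assertions to the equivalent reformulation of \eqref{eq:ehto} provided by Lemma \ref{the:density}: namely, that \eqref{eq:ehto} essentially amounts to the Frostman-type growth bound $\mu(B(x,r)) \leq C r^s$. Once this is exploited, (i) becomes a direct application of Frostman's lemma, and (ii) becomes the mass distribution principle.

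For part (i), I would invoke Frostman's lemma: since $\mathcal{H}^s(E) > 0$, there exists a nontrivial Radon measure $\mu$ supported on $E$ with $\mu(B(x,r)) \leq r^s$ for every $x \in \R^n$ and $r > 0$. Because this holds for \emph{all} balls in $\R^n$, the converse direction of Lemma \ref{the:density} yields \eqref{eq:ehto}. In the self-similar case I would appeal to Hutchinson's theorem: under the open set condition, the natural self-similar measure $\mu_E$ on a self-similar set of Hausdorff dimension $s$ is $s$-Ahlfors regular on $E$, so $\mu_E(B(x,r)) \leq C r^s$ for $x \in E$ and $0 < r \leq \operatorname{diam}(E)$. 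For balls centered off $E$, a nearest-point comparison $B(x,r) \cap E \subset B(y,2r)$ reduces to the previous bound, and for $r \geq \operatorname{diam}(E)$ the finiteness $\mu_E(\R^n) = 1$ trivially gives a bound of the form $C r^s$. Once more, Lemma \ref{the:density} delivers \eqref{eq:ehto}.

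For part (ii), Lemma \ref{the:density} gives $\mu(B(x_0,r)) \leq C r^s$ whenever $B(x_0,2r) \subset \O$. The goal is to deduce $\mathcal{H}^s(E) > 0$, via the mass distribution principle, from the fact that $\mu(E) = \mu(\O) > 0$. The only nuisance is that the growth bound degenerates near $\partial\O$. I would handle this by exhausting $E$ by the sets $E_k = \{x \in E : \dist(x,\partial\O) > 1/k\}$; since $\mu(\O \setminus E) = 0$, monotone convergence gives $\mu(E_k) > 0$ for some $k$. For any cover of $E_k$ by Borel sets $\{U_i\}$ with $\operatorname{diam}(U_i) < 1/(3k)$, the ball $B(x_i, \operatorname{diam}(U_i))$ containing $U_i$ satisfies $B(x_i, 2\operatorname{diam}(U_i)) \subset \O$, whence
\[
0 < \mu(E_k) \leq \sum_i \mu(U_i) \leq C \sum_i \operatorname{diam}(U_i)^s.
\]
Taking the infimum over such covers gives $\mathcal{H}^s(E_k) > 0$, so $\dim_\mathcal{H}(E) \geq \dim_\mathcal{H}(E_k) \geq s$.

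The main obstacle is the self-similar assertion in (i): invoking Ahlfors regularity of the natural measure under only the open set condition is classical but nontrivial, and it is the only place where a genuine fractal-geometric input (rather than just Frostman/mass distribution) is required. The boundary complication in (ii) is routine provided one remembers to exhaust $E$ from the inside before invoking the mass distribution principle.
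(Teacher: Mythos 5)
Your proposal is correct and follows essentially the same route as the paper: both directions are reduced via Lemma \ref{the:density} to the growth bound $\mu(B(x,r)) \le Cr^s$, and then Frostman's lemma (in one direction for (i), in the other for (ii)) together with the Ahlfors regularity of the natural self-similar measure (the paper's \eqref{eq:natural}) does the rest. The one place where you are more explicit than the paper is in part (ii): the paper invokes Frostman's lemma directly even though Lemma \ref{the:density} only yields the growth estimate for balls well inside $\O$, whereas you patch this by exhausting $E$ by the sets $E_k = \{x \in E : \dist(x,\partial\O) > 1/k\}$ and applying the mass distribution principle to some $E_k$ of positive $\mu$-measure. That extra care is genuinely needed (or at least deserves a sentence), so your write-up is slightly more complete on that point, but the underlying idea is identical.
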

\begin{proof}
We apply Frostman's lemma (cf. \cite[Thm. 8.8]{Mattila}): For a Borel set
$B \subset \R^n$, $\mathcal{H}^s(B) > 0$ if and only if there exists a Borel
measure $\mu$ such that $\mu(B(x,r)) \le r^s$ for all $x \in \R^n$ and $r > 0$.

(i) If $\mathcal{H}^s(E)>0$, then Frostman's lemma 
provides a measure $\mu$ with density
$\mu(B(x_0,r))\leq r^s$ for all $B(x_0,r)\subset\R^n$. Then Lemma
\ref{the:density} yields that $\mu$ satisfies the condition \eqref{eq:ehto}.
The other claim follows similarly
from \eqref{eq:natural} and Lemma \ref{the:density}.

(ii) If the measure $\mu$ satisfies the condition
\eqref{eq:ehto}, then Lemma \ref{the:density} implies that
$\mu(B(x_0,r)) \leq C r^s$ for all $B(x_0,2r) \subset \O$, and again Frostman's
lemma indicates that $\dim_\mathcal{H}(\spt \mu )\geq s$; especially $\dim_\mathcal{H}(E) \ge s$.
\end{proof}

\section{Existence of the first eigenfunction} \label{sec:eigenex}

\begin{theorem}
Any $(p,\mu)$-eigenvalue $\lambda$ must be strictly positive, 
and there exists a smallest
$\lambda>0$ and a corresponding first eigenfunction $u \in W^{1,p}_0(\O)$ 
solving \eqref{eq}.
\end{theorem}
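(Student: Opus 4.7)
The plan is to establish positivity by a direct test-function argument, and then to obtain the first eigenpair through minimization of the Rayleigh quotient
\[
R(v) \;=\; \frac{\int_\O|\nabla v|^p\,dx}{\int_\O|v|^p\,d\mu},
\]
with compactness of the embedding $W_0^{1,p}(\O)\hookrightarrow L^p(\O,\mu)$ as the key technical input.

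First I would handle positivity. Let $u\in W_0^{1,p}(\O)\setminus\{0\}$ solve \eqref{eq}. Choosing $\vf=u$ in \eqref{eq:heikko} (permitted after the test class is extended to $W_0^{1,p}(\O)$ by density, as noted after the definitions) yields
\[
\int_\O|\nabla u|^p\,dx \;=\; \lambda\int_\O|u|^p\,d\mu.
\]
The left-hand side is strictly positive since $u$ is nontrivial in $W_0^{1,p}(\O)$, forcing $\int_\O|u|^p\,d\mu>0$ and hence $\lambda=R(u)>0$. This identity also shows that every eigenvalue coincides with the Rayleigh quotient of its eigenfunction, so minimality will follow automatically once the infimum of $R$ is attained.

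Next I would set $\lambda_1=\inf R$ over $\{\,v\in W_0^{1,p}(\O):\|v\|_{L^p(\O,\mu)}=1\,\}$. H\"older in the finite measure space $(\O,\mu)$ combined with \eqref{eq:ehto} gives $1\le C\mu(\O)^{1/p-1/q}\|\nabla v\|_p$ for admissible $v$, so $\lambda_1>0$. I would then pick a minimizing sequence $u_k$, extract a subsequence with $u_k\rightharpoonup u$ weakly in $W_0^{1,p}(\O)$, and invoke weak lower semicontinuity of the $p$-Dirichlet integral to obtain $\int_\O|\nabla u|^p\,dx\le\lambda_1$. Provided $\|u\|_{L^p(\O,\mu)}=1$ also passes to the limit, $u$ is a minimizer, and a standard Lagrange-multiplier computation produces a weak solution of \eqref{eq} with $\lambda=\lambda_1$.

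The only nontrivial step is compactness of $W_0^{1,p}(\O)\hookrightarrow L^p(\O,\mu)$, used to preserve the constraint. The strict inequality $p<q$ in \eqref{eq:ehto} delivers equi-$\mu$-integrability of $\{|u_k|^p\}$ via H\"older, since $\int_A|u_k|^p\,d\mu\le\mu(A)^{1-p/q}\|u_k\|_{L^q(\mu)}^p$, so by Vitali's theorem it suffices to extract a $\mu$-a.e.\ convergent subsequence. Here I would exploit that $\mu$ is absolutely continuous with respect to the $p$-capacity: Lemma \ref{the:density} gives $\mu(B(x,r))\le Cr^s$ with $s=q(n-p)/p>n-p$, and the standard Hausdorff/capacity comparison forces every $\capacity_p$-null set to be $\mu$-null. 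Working with quasicontinuous representatives of the $u_k$ and appealing to Adams-type compactness in potential theory then yields the required $\mu$-a.e.\ convergence. The delicacy is that $\mu$ may be singular with respect to Lebesgue measure---fractal supports being the main motivation---so Rellich--Kondrachov alone is not available.
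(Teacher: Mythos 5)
Your positivity argument and the minimization setup are exactly as in the paper: test with $\vf=u$ to get the Rayleigh identity, bound $\lambda$ from below via H\"older (using $q>p$ and $\mu(\O)<\infty$) together with \eqref{eq:ehto}, then minimize the Rayleigh quotient and pass to the Euler--Lagrange equation. Where your proposal diverges from the paper is in how the compactness of $W^{1,p}_0(\O)\hookrightarrow L^p(\O,\mu)$ is established, and that is where there is a real gap.

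The paper proves compactness in two concrete steps: it cites \cite[Thm.~11.9.1/3]{Mazja} to get, from a bounded sequence in $W^{1,p}_0(\O)$, a subsequence converging in $L^p(\O',\mu)$ for every $\O'\subset\subset\O$, and then controls the boundary contribution by the explicit tail estimate
\[
\norm{v_n - v}^p_{L^p(\O,\mu)} \le
\norm{v_n - v}^p_{L^p(\O',\mu)} +
\norm{v_n - v}^p_{L^q(\O,\mu)}\,\mu(\O\setminus\O')^{1-\frac{p}{q}}.
\]
Your route via equi-integrability and Vitali is sensible in shape, and the observation that $s=q(n-p)/p>n-p$ forces $\mu$ to vanish on sets of $p$-capacity zero is correct. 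But the step you lean on --- that a bounded sequence in $W^{1,p}_0(\O)$ has a subsequence of quasicontinuous representatives converging $\mu$-a.e. --- does not follow merely from absolute continuity of $\mu$ with respect to $\capacity_p$. The standard q.e.-convergence results (e.g.\ in \cite{HKM:book} or Adams--Hedberg) require \emph{strong} $W^{1,p}$ convergence; from mere boundedness (or weak convergence plus Rellich in $L^p(dx)$) you do not get a subsequence converging outside a set of $p$-capacity zero. The phrase ``Adams-type compactness in potential theory'' is doing all the remaining work without a precise statement; if it means compactness of the trace embedding into $L^p(\mu)$ under strict-inequality growth, then you are citing essentially the same black box as the paper does (Mazja/Adams), and the Vitali scaffolding is redundant; if it means something weaker, I do not see that it closes the gap. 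The clean fix is to do what the paper does: cite the trace compactness theorem directly, and handle the tail with the displayed estimate rather than trying to manufacture $\mu$-a.e.\ convergence from capacity considerations alone.
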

\begin{proof}
The weak form of \eqref{eq} reads
\begin{equation}
\label{eq:eq_weak}
\int_\O \abs{\nabla u}^{p-2}\nabla u \cdot \nabla \vp \,dx
= \lambda \int_\O \abs{u}^{p-2} u \vp \,d\mu
\end{equation}
for each $\vp \in \cao$ or, equivalently, for each $\vp \in W^{1,p}_0(\O)$.
Assuming a solution $u \in W^{1,p}_0(\O)$ exists for some $\lambda \in \R$,
we let $\vp=u$ in \eqref{eq:eq_weak} and obtain 
\[
\int_\O \abs{\nabla u}^{p}\,dx
= \lambda \int_\O \abs{u}^{p}\,d\mu.
\]
Thus every pair $(\lambda, u)$ of solutions must satisfy the Rayleigh equation
\begin{equation}
\label{eq:rayleigh1}
\lambda = \frac{\int_\O \abs{\nabla u}^{p}\,dx}{\int_\O \abs{u}^{p}\,d\mu}.
\end{equation}
Since $\O$ was assumed bounded and $q > p$, we have, by H\"older's inequality,
that 
\[
 \norm{u}_{p,\mu} \le \mu(\O)^{\frac{1}{p}-\frac{1}{q}}\norm{u}_{q,\mu}
\]
and further by the embedding \eqref{eq:ehto} 
that $\norm{u}_{p,\mu} \le
C\norm{\nabla u}_{p}$, where  $C > 0$ does not depend on $u$. Thus
$\lambda \ge 1/C > 0$ in \eqref{eq:rayleigh1}.

To prove the existence of the first eigenfunction, let
\[
A = \left\{ \frac{\int_\O \abs{\nabla u}^{p}\,dx}{\int_\O \abs{u}^{p}\,d\mu}
 \colon \, u \in W^{1,p}_0(\O)\right\}
\]
and let $\lambda = \inf A > 0$. Take a sequence $u_n \in
W^{1,p}_0(\O)$ such that 
\[
\lim_{n\to\infty}\frac{\int_\O \abs{\nabla u_n}^{p}\,dx}{\int_\O \abs{u_n}^{p}\,d\mu}=\lambda.
\]
In order to proceed we assert that the embedding of
$W^{1,p}_0(\O)$ to $L^p(\O,\mu)$ is compact.
From \eqref{eq:mitta}
and \cite[Thm. 11.9.1/3]{Mazja} one deduces that
for each bounded sequence in $W^{1,p}_0(\O)$
there exist a function
$v$ and a subsequence $v_n$ such that $v_n \to v$ in $L^p(\Omega',\mu)$ for
each $\Omega'\subset\subset \O$. Now, the estimate
\[
\norm{v_n - v}^p_{L^p(\O,\mu)} \le
\norm{v_n - v}^p_{L^p(\O',\mu)} +
\norm{v_n - v}^p_{L^q(\O,\mu)}\mu(\O\setminus\O')^{1-\frac{p}{q}}
\]
yields the asserted compactness. Hence there exists a function
$u \in W^{1,p}_0(\O)$ such that
\[
R(u) := \frac{\int_\O \abs{\nabla u}^{p}\,dx}{\int_\O \abs{u}^{p}\,d\mu}=\lambda.
\]
Now $u$ is a weak solution to \eqref{eq}, because
\[
  \lim_{t \to 0}\frac{R(u+t\varphi) - R(u)}{t} = 0
\]
for each $\varphi \in C^\infty_0(\O)$, which yields
\[
  \frac{1}{\int_\O |u|^p\,d\mu}\left(\lambda \int_\O |u|^{p-2} u\varphi\,d\mu
- \int_\O |\nabla u|^{p-2}\nabla u \cdot \nabla \varphi\,dx\right) = 0.
\]
Finally by scaling, $\int_\O |u|^p\,d\mu = 1$ may be assumed without loss of generality, which completes the proof.
\end{proof}

\section{Continuity of eigenfunctions} \label{sec:continuity}

We now set out to prove our main result,
a priori H\"older continuity of any
$(p,\mu)$-eigenfunction in the case where $p<n$.
The first step is a Moser iteration that employs
a result by Mal\'y and Ziemer \cite[Cor. 1.95]{MalyZiemer}.
\begin{theorem}\label{thm:sup}
Let $u \in W^{1,p}_0(\O)$ be a $(p,\mu)$-eigenfunction in $\O$, assume
$p<n$, and let
$B(x_0,2r)\subset \O$. Then for any $0<\sigma < 1$,
\begin{equation}\label{eq:sup}
\sup_{B(x_0,\sigma r)} u^+
\leq \frac{C}{(1-\sigma)^{n/p}}
      \l( \vint_{B(x_0,r)}(u^+)^p\, dx  \r)^{1/p},
\end{equation}
where $C= C(p,n,q,\lambda)$. The same estimate holds for $u^-$.
\end{theorem}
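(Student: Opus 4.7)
The plan is to adapt the classical Moser iteration to the weighted right-hand side of \eqref{eq:eq_weak}, using \cite[Cor.~1.95]{MalyZiemer} as a localized Sobolev-type embedding that absorbs the extra measure term.

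First I would derive a Caccioppoli-type inequality on dyadic balls. Fix $\sigma r\le r_1<r_2\le r$, pick a cutoff $\eta\in C_0^\infty(B(x_0,r_2))$ with $\eta\equiv 1$ on $B(x_0,r_1)$ and $|\nabla\eta|\le 2/(r_2-r_1)$, and test \eqref{eq:eq_weak} with $\vp=\eta^p(\min(u^+,k))^\alpha$ for $\alpha\ge 1$. Expanding by Leibniz, absorbing the cross term via Young's inequality, and sending $k\to\infty$ by monotone convergence yields, after setting $\beta=(\alpha+p-1)/p$ and $w=(u^+)^\beta$,
\begin{equation*}
\int\eta^p|\nabla w|^p\,dx
 \;\le\; C\beta^p(r_2-r_1)^{-p}\!\!\int_{B_{r_2}}\!\! w^p\,dx
 \;+\; C\beta^p\lambda\!\!\int_{B_{r_2}}\!\!\eta^p w^p\,d\mu,
\end{equation*}
which, combined with $|\nabla(\eta w)|^p\le C(\eta^p|\nabla w|^p+w^p|\nabla\eta|^p)$, gives a gradient estimate for $\eta w\in W_0^{1,p}(\Omega)$.

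The crux is the measure integral $\int(\eta w)^p\,d\mu$. Here I would invoke \cite[Cor.~1.95]{MalyZiemer}: the growth bound \eqref{eq:mitta} from Lemma \ref{the:density} has exponent $s=q(n-p)/p$ strictly greater than $n-p$ (thanks to $q>p$ in \eqref{eq:ehto2}), which produces a localized embedding of the form $\int_{B_{r_2}}(\eta w)^p\,d\mu\le Cr_2^{\epsilon}\int_{B_{r_2}}|\nabla(\eta w)|^p\,dx$ with $\epsilon=s-(n-p)>0$. For $r$ below a threshold $r_0=r_0(n,p,q,\lambda)$ one has $C\lambda r^\epsilon\le 1/2$, so the measure term absorbs into the gradient side, leaving a clean Caccioppoli inequality. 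The standard Sobolev inequality ($p<n$, $p^*=np/(n-p)$) applied to $\eta w$ then upgrades $L^p$ to $L^{p^*}$, yielding the reverse-H\"older step
\begin{equation*}
\l(\vint_{B_{r_1}}(u^+)^{\beta p^*}\,dx\r)^{1/(\beta p^*)}
\;\le\;\l[\frac{C\beta^p}{(1-r_1/r_2)^p}\r]^{1/(\beta p)}
\l(\vint_{B_{r_2}}(u^+)^{\beta p}\,dx\r)^{1/(\beta p)}.
\end{equation*}

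Iterating this with $\beta_k=(p^*/p)^k$ and a geometric sequence of radii $r_k=\sigma r+(1-\sigma)r\,2^{-k}$ gives \eqref{eq:sup} by the standard telescoping argument; convergence of $\sum_k\beta_k^{-1}=n/p$ produces precisely the prefactor $(1-\sigma)^{-n/p}$. The smallness of $r$ used in the absorption step is removed by covering $B(x_0,\sigma r)$ with finitely many admissible balls of radius $\le r_0$, which only enlarges the constant. The estimate for $u^-$ is identical, since $-u$ is an eigenfunction for the same $\lambda$. The main obstacle is precisely the measure integral produced on the right-hand side at every Caccioppoli step: in the classical $\mu=$~Lebesgue setting it simply feeds into the Sobolev step, whereas here closing the iteration requires the small factor $r^\epsilon$ supplied by \cite[Cor.~1.95]{MalyZiemer}, which is available only because $q>p$ makes $s$ strictly exceed $n-p$.
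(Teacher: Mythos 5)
Your overall strategy (Caccioppoli via testing with $\eta^p(u^+)^\alpha$, control of the measure term by a Sobolev-type embedding, Sobolev on $\eta w$, then Moser iteration with shrinking radii and the familiar $(1-\sigma)^{-n/p}$ prefactor) is the same as the paper's, but the way you handle the measure term has a genuine gap.

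You replace the inequality the paper actually uses from \cite[Cor.~1.95]{MalyZiemer},
\[
\int_\O \eta^p v^p \, d\mu \leq C_{n,p,q} \l( \gamma \int_\O |\nabla(\eta v)|^p\, dx + \gamma^{1-p/\vep}  \int_\O (\eta v)^p\, dx\r),\qquad \gamma>0\text{ free,}
\]
by a single-term estimate
\[
\int_{B_{r_2}} (\eta w)^p\,d\mu \le C\,r_2^{\epsilon}\int |\nabla(\eta w)|^p\,dx,\qquad \epsilon = (n-p)\l(\tfrac{q}{p}-1\r),
\]
which is really Adams \cite[Cor.~1.93]{MalyZiemer} composed with H\"older and the growth $\mu(B_r)\lesssim r^s$, not Cor.~1.95. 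Both are valid, but they are not interchangeable here. Your Caccioppoli inequality carries a factor $C\beta^p$ in front of the measure term, so the condition to absorb into the gradient side is $C\beta^p\lambda r_2^{\epsilon}\le 1/2$, not $C\lambda r_2^{\epsilon}\le 1/2$ as you wrote. Your threshold $r_0$ therefore depends on $\beta$; since $\beta_k=(p^*/p)^k\to\infty$ along the iteration while $r_k\to\sigma r$ stays bounded away from zero, the absorption step fails from some $k$ onward. The covering argument does not rescue this: the number of admissible balls of radius $\le r_0(\beta_k)$ needed to cover $B(x_0,\sigma r)$ grows without bound in $k$, so the constant cannot remain uniform.

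The free parameter $\gamma$ in Cor.~1.95 is precisely what resolves this. The paper chooses $\gamma=1/(2C_1 s^p)$, which absorbs the gradient term unconditionally (no smallness of $r$), at the price of a factor $\gamma^{1-p/\vep}\sim s^{p^2/\vep}$ on the zero-order term; this is a polynomial in the iteration parameter and washes out in the infinite product $\prod_i (\chi^{i})^{c/\chi^{i}}$. To close your version you should invoke the genuinely two-term, $\gamma$-dependent Maz'ya inequality of Cor.~1.95 rather than the one-term Adams estimate, because only the former supplies a free parameter capable of eating the $\beta^p$ factor.
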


\begin{proof}
We first consider the case $\sigma = 1/2$.  
Let $s \geq 1$ and denote $\beta = p(s-1)+1$. Let $\eta \in C_0^\infty(B(x_0,r))$,
$\eta \geq 0$.
Test the equation \eqref{eq} with
$\vf = \eta^p (u^+)^\beta \in W^{1,p}_0(\O)$
to obtain
\begin{align*}
&\beta \int_\O \eta^p (u^+)^{\beta -1} |\nabla u|^p\, dx
+ p\int_\O \eta^{p-1}(u^+)^\beta |\nabla u|^{p-2}\nabla u \cdot \nabla \eta \, dx\\
&= \lambda \int_\O \eta^p |u|^{p-1}(u^+)^\beta \, d\mu.
\end{align*}
Denoting $v= (u^+)^s$, the previous equality reads
\begin{align*}
\int_\O  |\eta\nabla v|^p\, dx
&\leq \frac{sp}{\beta}\int_\O  |\eta \nabla v|^{p-1}|v\nabla \eta|\, dx 
+ \frac{\lambda s^p}{\beta} \int_\O \eta^p v^p \, d\mu\\
&\leq p\int_\O  |\eta \nabla v|^{p-1}|v\nabla \eta|\, dx 
+ \lambda s^{p-1} \int_\O \eta^p v^p \, d\mu,
\end{align*}
and by Young's inequality,
\begin{align*}
  (1-(p-1)\delta^{p'}) \int_\O |\eta \nabla v|^p\, dx
\leq \delta^{-p}\int_\O |v\nabla \eta|^p\, dx
+ \lambda s^{p-1} \int_\O \eta^p v^p \, d\mu.
\end{align*}
Choose $\delta$ so that $1-(p-1)\delta^{p'}= \frac{1}{2}$. Then
\begin{equation}\label{eq:A}
  \begin{aligned}
 \int_\O |\eta \nabla v|^p \, dx
&\leq C \int_\O |v \nabla \eta|^p \, dx 
+ 2 \lambda s^{p-1} \int_\O \eta^p v^p \, d\mu\\
&\leq C s^p\l( \int_\O |v \nabla \eta|^p \, dx 
+  \lambda \int_\O \eta^p v^p \, d\mu  \r).
\end{aligned}
\end{equation}
The last term is estimated using \cite[Cor. 1.95]{MalyZiemer}:
\begin{align}
\label{eq:B}
\int_\O \eta^p v^p \, d\mu
\leq C_{n,p,q} \l(  
   \gamma \int_\O |\nabla(\eta v)|^p\, dx 
   + \gamma^{1-p/\vep}  \int_\O (\eta v)^p\, dx\r),
\end{align}
for any $\gamma > 0$, where $ \vep = \min\{(n-p)(\frac{q}{p}-1),n(p-1)\}$ by \eqref{eq:mitta}.
Substituting \eqref{eq:B} to \eqref{eq:A} we arrive at
\begin{equation*}
(1-C_1s^p \gamma) \int_\O |\eta\nabla v|^p \, dx
\leq C_1 s^p (1+\gamma) \int_\O |v\nabla \eta|^p\, dx
 + C_1 s^p \gamma^{1-p/\vep} \int_\O |\eta v|^p\, dx  .
\end{equation*}
Choose $\gamma = 1/( 2 C_1 s^p)$ to obtain
\begin{equation*}
\int_\O |\eta \nabla v|^p\, dx
\leq (2 C_1 s^p + 1) \int_\O |v\nabla\eta|^p \, dx
+ C s^{(p^2)/\vep} \int_\O |\eta v|^p\, dx. 
\end{equation*}
Rearranging the constants yields
\begin{equation}
 \norm{\eta \nabla v}_p \leq C s^{p/\vep} \norm{v(\eta + |\nabla \eta|)}_p,
\end{equation}
where $C= C(p,n,q,\lambda)$.
By Sobolev's inequality and the fact that $s \geq 1$ we have 
\begin{equation*}
\norm{\eta v}_{p\chi}\leq C s^{p/\vep} \norm{v(\eta + |\nabla \eta|)}_p\,,
\end{equation*}
where $ \chi = n/(n-p)$. Pick two numbers $h$ and $h'$ such that 
$0 < h' < h \leq r$. 
Choose $\eta \in C_0^\infty(B(x_0,h)) $ so that $ \eta = 1$ in $B(x_0,h')$,
$0\leq \eta \leq 1$ in $B(x_0,h)$ and $|\nabla \eta| \leq C(h-h')^{-1}$. Then
\begin{equation*}
  \norm v_{p\chi; B(x_0,h')}\leq C s^{p/\vep} (h-h')^{-1}\norm v_{p;B(x_0,h)} .
\end{equation*}
Denoting $\alpha = ps$, this reads
\begin{equation}
\label{eq:iterate1}
\norm {u^+}_{\alpha \chi ;B(x_0,h')}
\leq \l[C(\alpha/p)^{p/\vep} (h-h')^{-1}  \r]^{p/\alpha} \norm{u^+}_{\alpha;B(x_0,h)}.  
\end{equation}
Note that when $s=1$ (i.e. $\alpha = p$), the right-hand side is finite.
Next we set up an iteration scheme.
Denote $\alpha_i = \chi^ip$, $h_i= r(1/2 + 2^{-i-1})$ and $h_i' = h_{i+1}$for $i \ge 0$,
so that \eqref{eq:iterate1} reads
\begin{equation*}
\begin{aligned}
\norm{u^+}_{p\chi^{i+1};B(x_0,h_{i+1})}
\leq \l[C \chi^{ip/\vep} (h_i-h_{i+1})^{-1}\r]^{1/\chi^i} \norm{u^+}_{p\chi^i;B(x_0,h_i)}.
\end{aligned}
\end{equation*}
Take averages on both sides (denoting $\abs{B(0,1)} = \omega_n$) to obtain
\begin{equation}\label{iteroitava}
\begin{aligned}
&\l(\vint_{B(x_0,h_{i+1})}|u^+|^{p\chi^{i+1}} dx \r)^\frac{1}{p\chi^{i+1}}\\
&\leq \omega_n^{1/(n\chi^i)} \l[\frac{h_i}{h_{i+1}^{1/\chi}} \r]^\frac{n}{p\chi^i}
    \l[C \chi^{ip/\vep} (h_i-h_{i+1})^{-1}\r]^{1/\chi^i} 
    \l(\vint_{B(x_0,h_i)}|u^+|^{p\chi^i} dx \r)^\frac{1}{p\chi^i}\\
&\leq \l[C 2^{i+2}\chi^{ip/\vep}\r]^{1/\chi^i}
    r^{\frac{n}{p\chi^i}-\frac{n}{p\chi^{i+1}}- \frac{1}{\chi^i}}
     \l(\vint_{B(x_0,h_i)}|u^+|^{p\chi^i} dx \r)^\frac{1}{p\chi^i}\\
&= \l[C 2^{i+2}\chi^{ip/\vep}\r]^{1/\chi^i}
     \l(\vint_{B(x_0,h_i)}|u^+|^{p\chi^i} dx \r)^\frac{1}{p\chi^i}.
\end{aligned}
\end{equation}
Now iterating \eqref{iteroitava} yields
\begin{equation*}
\begin{aligned}
&||u^+||_{\infty; B(x_0,r/2)} 
=  \lim_{l\to \infty}\l(\vint_{B(x_0,1/2)} |u^+|^l\, dx \r)^{1/l}\\
&= \lim_{m\to \infty}\l(\vint_{B(x_0,h_m)} |u^+|^{p\chi^m}\, dx \r)^\frac{1}{p\chi^m}\\
&\leq \lim_{m\to \infty} 
  C^{\sum_{i=1}^{m-1} \frac{1}{\chi^i}} (2\chi^{p/\vep})^{\sum_{i=1}^{m-1}\frac{i}{\chi^i}}
  \l(\vint_{B(x_0,r)}|u^+|^p\,dx\r)^{1/p}\\
&= C(n,p,q,\lambda) \l(\vint_{B(x_0,r)}|u^+|^p\,dx\r)^{1/p},
\end{aligned}
\end{equation*}
completing the case $\sigma=1/2$.

If $\sigma \in (0,1/2)$, then
$B(x_0,\sigma r)\subset B(x_0,r/2)$, and we have
\begin{equation*}
\begin{aligned}
\norm{u^+}_{\infty;B(x_0,\sigma r)}
&\leq \norm{u^+}_{\infty;B(x_0,r/2)}
\leq C r^{-n/p} \norm{u^+}_{p;B(x_0,r)} \\
&\leq  C r^{-n/p} (1-\sigma)^{-n/p} \norm{u^+}_{p;B(x_0,r)}.
\end{aligned}
\end{equation*}
On the other hand, if $\sigma \in (1/2, 1)$, then for each ball 
$B(z,(1-\sigma)r)\subset B(x_0,\sigma r)$ we have
$B(z, 2(1-\sigma)r)\subset B(x_0,r)$ and hence
\begin{equation*}
\begin{aligned}
\norm{u^+}_{\infty;B(z,(1-\sigma)r)}
&\leq C r^{-n/p}(1-\sigma)^{-n/p}\norm{u^+}_{p;B(z,2(1-\sigma)r)}\\
&\leq   C r^{-n/p}(1-\sigma)^{-n/p}\norm{u^+}_{p;B(x_0,r)}.
\end{aligned}
\end{equation*}
This implies that
\begin{equation*}
\norm{u^+}_{\infty;B(x_0,\sigma r)}
\leq  C r^{-n/p}(1-\sigma)^{-n/p}\norm{u^+}_{p;B(x_0,r)}.
\end{equation*}
\end{proof}

To proceed from boundedness to continuity, we extend below a result by
Kilpel\"ainen and Zhong \cite[Thm. 1.14]{KilpZhong}. Their result
is stated for positive measures only, but the proof holds almost
verbatim for signed measures as well.
Recently the same result has been obtained by Kuusi and Mingione
\cite{Tuomo_Rosario} using a different approach.

\begin{theorem}\label{thm:TeroXiao}
Let $u\in W_0^{1,p}(\O)$ be a weak solution of
$$
-\Delta_p u = \mu \ \text{ in } \O,
$$  
where $\mu$ is a signed Radon measure in $\O$ such that there are constants
$M>0$ and $0< \alpha < 1$ with
\begin{equation}\label{eq:merkkimitta}
|\mu|(B(x_0,R))\leq MR^{n-p +\alpha(p-1)}  
\end{equation}
whenever $B(x_0,4R) \subset \O$. Then $u\in C^{0,\alpha}(\O)$.
\end{theorem}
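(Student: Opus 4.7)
The plan is to mirror the argument of Kilpeläinen and Zhong \cite{KilpZhong}, verifying that the sign structure of $\mu$ enters their proof only through the total-variation bound \eqref{eq:merkkimitta}. As a preparatory step, I would record the Jordan decomposition $\mu = \mu^+ - \mu^-$; since $\mu^{\pm} \le |\mu|$, both parts satisfy \eqref{eq:merkkimitta} with the same constant $M$, and the data bound to be exploited is precisely $|\mu|(B(x_0,R)) \le M R^{n-p+\alpha(p-1)}$.

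The backbone of the Kilpeläinen-Zhong argument is a Caccioppoli-type inequality obtained by testing the equation with truncations of the form $(u-k)^{\pm}\eta^p$, where $\eta\ge 0$ is a standard cut-off. The right-hand side then takes the shape $\int\psi\,d\mu$ with $\psi \ge 0$, which I would estimate via
\begin{equation*}
\Bigl|\int \psi \, d\mu\Bigr| \le \|\psi\|_\infty\, |\mu|(B(x_0, R)) \le M\, \|\psi\|_\infty\, R^{n-p+\alpha(p-1)}.
\end{equation*}
This is the single place in \cite{KilpZhong} where the sign of $\mu$ could a priori matter, and the absolute-value bound dispatches it without loss. Everything that follows in the KZ proof---the De Giorgi-type level-set iteration controlling the positive and negative excesses of $u$---depends on $\mu$ exclusively through the above scalar quantity.

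With the Caccioppoli bound in place, I would then reproduce the KZ oscillation decay verbatim: there exist $\theta,\tau \in (0,1)$, depending only on $n,p,\alpha,M$, such that
\begin{equation*}
\osc_{B(x_0,\tau R)} u \;\le\; \theta\, \osc_{B(x_0,R)} u \;+\; C R^{\alpha}
\end{equation*}
whenever $B(x_0,4R) \subset \O$. Iterating on dyadic scales upgrades this to $u \in C^{0,\alpha}(\O)$. The main point requiring vigilance---and the only conceivable obstacle---is to ensure that no step in \cite{KilpZhong} invokes a one-sided comparison or maximum principle genuinely requiring $\mu\ge 0$; since their derivation is fully energy-theoretic and sees $\mu$ only through $|\mu|$, the extension is indeed verbatim. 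The alternative gradient potential estimates of Kuusi and Mingione \cite{Tuomo_Rosario}, which apply to signed measures directly, furnish an independent cross-check.
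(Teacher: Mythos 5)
Your central observation---that the Kilpel\"ainen--Zhong proof sees $\mu$ only through the scalar quantity $|\mu|(B(x_0,R))$, so that the extension from nonnegative to signed measures costs nothing---is correct and is exactly the point the paper makes before stating the theorem. However, your description of the proof you claim to be mirroring is not what Kilpel\"ainen--Zhong (or this paper) actually do, and the substitute you sketch has a gap in the exponent.

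The paper's proof is not a De Giorgi truncation scheme with level sets $(u-k)^{\pm}$ and an oscillation decay. It is a comparison argument: on a ball $B(x_0,R)$ one takes the $p$-harmonic function $h$ with $u-h \in W_0^{1,p}(B(x_0,R))$, splits $\int_{B_r}|\nabla u|^p$ using the monotonicity of the $p$-Laplacian, controls the cross term $\int(u-h)\,d\mu$ by $\int|u-h|\,d|\mu|$ followed by Adams' inequality and Young's inequality (this is the single place where the sign of $\mu$ could have mattered), uses the reverse-H\"older/Morrey decay of $|\nabla h|$ to handle the remaining terms, and arrives at the Morrey estimate $\int_{B_r}|\nabla u|^p\,dx \leq C(r/R)^{n-p+p\alpha}$ via the Campanato iteration lemma \cite[Lemma~III.2.1]{G}. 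The conclusion $u \in C^{0,\alpha}$ then follows from the Dirichlet growth theorem \cite[Thm.~III.1.1]{G}, not from a geometric oscillation iteration.

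This matters for more than attribution. Your proposed oscillation decay $\osc_{B_{\tau R}}u \leq \theta\,\osc_{B_R}u + CR^\alpha$ with fixed structural $\theta,\tau$ yields, after dyadic iteration, H\"older continuity with exponent $\min\{\alpha,\log\theta/\log\tau\}$, which is in general strictly smaller than the $\alpha$ appearing in the measure growth \eqref{eq:merkkimitta}. A plain De Giorgi scheme thus gives \emph{some} H\"older exponent but not a priori the one claimed in the theorem; obtaining the sharp exponent requires exactly the Morrey/Campanato mechanism the paper uses, where the decay rate $n-p+p\alpha$ of the Dirichlet energy is tracked precisely and then transferred to the H\"older seminorm by the Dirichlet growth theorem. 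You would need either to run the comparison argument as in the paper or to supply a refined De Giorgi iteration (in which the excess $CR^\alpha$ is absorbed at every scale so as to recover the exponent $\alpha$); as written, the sketch does not establish $u\in C^{0,\alpha}$ for this specific $\alpha$.
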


\begin{proof}
Fix $B(x_0,R) \subset \O$ so that $B(x_0, 4R) \subset \O$, and let $h$ be the
$p$-harmonic function in $B(x_0,R)$ with
$u-h\in W_0^{1,p}(B(x_0,R))$.
Then, for each $0< r \le R$,
\begin{equation}\label{eq:vertailu}
\begin{aligned}
&\int_{B(x_0,r)} |\nabla u|^p\, dx\\
&= \int_{B(x_0,r)} (|\nabla u|^{p-2}\nabla u -|\nabla h|^{p-2}\nabla h)
       \cdot (\nabla u-\nabla h)\, dx\\
&\qquad+ \int_{B(x_0,r)} |\nabla h|^{p-2}\nabla h \cdot (\nabla u-\nabla h)\, dx
+ \int_{B(x_0,r)} |\nabla u|^{p-2}\nabla u \cdot \nabla h\, dx\\ 
&\leq \int_{B(x_0,R)}    (|\nabla u|^{p-2}\nabla u -|\nabla h|^{p-2}\nabla h)
       \cdot (\nabla u-\nabla h)\, dx\\
& \qquad + \int_{B(x_0,r)} |\nabla h|^{p-1}|\nabla u|+ |\nabla h||\nabla u|^{p-1}\, dx.  
\end{aligned}
\end{equation}
By Adams' inequality  \cite[Cor. 1.93]{MalyZiemer} and Young's inequality
we have, for each $\varepsilon > 0$,
\begin{equation}
\label{eq:intermediate1}
\begin{aligned}
& \int_{B(x_0,R)}    (|\nabla u|^{p-2}\nabla u -|\nabla h|^{p-2}\nabla h)
       \cdot (\nabla u-\nabla h)\, dx\\
&= \int_{B(x_0,R)} (u-h)\,d\mu 
\leq \int_{B(x_0,R)}|u-h|\, d|\mu|   \\
&\leq C R^{(p-1)(n-p+\alpha p)/p} (\int_{B(x_0,R)} |\nabla u - \nabla h|^p\, dx )^{1/p}\\  
&\leq  C_\varepsilon R^{n-p+\alpha p} 
  +\frac{\vep}{2} \int_{B(x_0,R)} |\nabla u|^p\, dx,
\end{aligned}
\end{equation}
where we also used the minimizing property of $p$-harmonic functions.
Next recall that
 if $h$ is $p$-harmonic in $\O$, then
\begin{equation*}
\vint_{B(x_0,r)} |\nabla h|^p \, dx 
\leq C 
\vint_{B(x_0,R)} |\nabla h|^p\, dx  
\end{equation*}
for each $0< r< R$ with $B(x_0,R) \subset \O$ (see e.g.
\cite[Lemma 2.1]{Kilp}).
We employ this (along with Young's inequality and the minimizing property)
to estimate the last term on the right hand side of \eqref{eq:vertailu}:
\begin{equation}
\label{eq:intermediate2}
\begin{aligned}
&\int_{B(x_0,r)} |\nabla h|^{p-1}|\nabla u|+ |\nabla h||\nabla u|^{p-1}\, dx\\   
&\leq \frac{1}{2}\int_{B(x_0,r)}|\nabla u|^p\, dx 
  + C\int_{B(x_0,r)} |\nabla h|^p\, dx\\      
&\leq \frac{1}{2}\int_{B(x_0,r)} |\nabla u|^p\, dx 
  + C\l(\frac{r}{R}\r)^n  \int_{B(x_0,R)} |\nabla h|^p \, dx  \\
&\leq \frac{1}{2}\int_{B(x_0,r)} |\nabla u|^p\, dx 
  + C\l(\frac{r}{R}\r)^n  \int_{B(x_0,R)} |\nabla u|^p \, dx.
\end{aligned}
\end{equation}
When substituting \eqref{eq:intermediate1} and \eqref{eq:intermediate2} to
\eqref{eq:vertailu} we obtain
\begin{equation*}
\begin{aligned}
 \int_{B(x_0,r)} |\nabla u|^p\, dx 
\leq C_\varepsilon R^{n-p+\alpha p}
  + \l(C\l(\frac{r}{R}\r)^n +\vep\r) \int_{B(x_0,R)} |\nabla u|^p \, dx.
\end{aligned}
\end{equation*}
We are now in position to appeal to \cite[Lemma III.2.1]{G} which yields
\begin{equation*}
 \int_{B(x_0,r)}|\nabla u|^p\, dx \leq C\l(\frac{r}{R}\r)^{n-p+ p\alpha } 
\end{equation*}
for $r< R$, if $B(x_0,4R) \subset \O$.
Thus $u\in C^{0,\alpha}(\O)$ by the
Dirichlet growth theorem \cite[Thm. III.1.1]{G}.
\end{proof}

We now obtain H\"older continuity as a direct consequence of
Theorems \ref{thm:sup} and \ref{thm:TeroXiao}:
\begin{theorem}\label{thm:jvuus}
Let $u \in W^{1,p}_0(\O)$ be a $(p,\mu)$-eigenfunction, and assume
$p<n$. Then
$u \in C^{0,\alpha}(\O)$ with any $0 < \alpha < 1$ satisfying
\[
\alpha \le \frac{q-p}{p(p-1)}(n-p).
\]
\end{theorem}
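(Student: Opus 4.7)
The plan is to interpret the eigenfunction equation as a $p$-Laplace problem $-\Delta_p u = \nu$ driven by the signed Radon measure
\[
\nu = \lambda \abs{u}^{p-2} u\, \mu,
\]
verify the Morrey-type mass bound \eqref{eq:merkkimitta} for $\nu$, and then invoke Theorem \ref{thm:TeroXiao}. The weak form \eqref{eq:eq_weak} says exactly that $u$ is a weak solution of $-\Delta_p u = \nu$ in this sense, so only the growth condition on $\nu$ needs to be verified.

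First, by applying Theorem \ref{thm:sup} to $u^+$ and $u^-$ separately, any eigenfunction is locally bounded in $\O$, with the local supremum controlled by a constant depending only on $n,p,q,\lambda$ and on the $L^p$ norm of $u$ on enclosing balls. In particular, for any ball with $B(x_0,4R)\subset\O$ we obtain a finite constant $M_0$ (which may depend on $u$, but not on $R$) such that $\norm{u}_{L^\infty(B(x_0,2R))} \le M_0$. Second, Lemma \ref{the:density} gives the growth $\mu(B(x_0,R)) \le C\, R^{s}$ with $s = q(n-p)/p$, provided $B(x_0,2R)\subset\O$. Combining these two estimates,
\[
\abs{\nu}(B(x_0,R)) \le \lambda \norm{u}_{L^\infty(B(x_0,R))}^{p-1}\mu(B(x_0,R)) \le M R^{s}
\]
whenever $B(x_0,4R)\subset\O$, with $M = \lambda M_0^{p-1} C$.

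It remains to match the Morrey exponents. The stated bound $\alpha \le (q-p)(n-p)/(p(p-1))$ is precisely the algebraic condition
\[
s \;=\; \frac{q(n-p)}{p} \;\ge\; n - p + \alpha(p-1),
\]
so under this condition $\abs{\nu}(B(x_0,R)) \le M R^{n-p+\alpha(p-1)}$, and \eqref{eq:merkkimitta} is satisfied. Since $0 < \alpha < 1$ is also assumed, Theorem \ref{thm:TeroXiao} applies and yields $u \in C^{0,\alpha}(\O)$. I do not expect any serious obstacle here: the only substantive steps are the $L^\infty$ bound (already done in Theorem \ref{thm:sup}) and the measure growth (already done in Lemma \ref{the:density}), and the rest is the one-line algebraic comparison of exponents that exactly reproduces the stated bound on $\alpha$.
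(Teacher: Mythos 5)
Your proposal is correct and follows essentially the same route as the paper's own proof: rewrite the eigenequation as $-\Delta_p u = \lambda|u|^{p-2}u\,\mu$, use Theorem \ref{thm:sup} for the local sup bound and Lemma \ref{the:density} for the measure growth, verify the exponent inequality $q(n-p)/p \ge n-p+\alpha(p-1)$ (equivalent to the stated bound on $\alpha$), and invoke Theorem \ref{thm:TeroXiao}. The only difference is that you are slightly more explicit about enforcing $B(x_0,4R)\subset\O$, which is indeed what Theorem \ref{thm:TeroXiao} requires; the paper writes the estimate only for $B(x_0,2r)\subset\O$.
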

\begin{proof}
By the definition of the operator $\Delta_{p,\mu}$ we have
$$
-\Delta_p u= \lambda |u|^{p-2}u\mu.
$$
Using Theorem \ref{thm:sup} and Lemma \ref{the:density} we estimate the measure
$|u|^{p-2}u\mu$: if $B(x_0,2r)\subset\O$, then
\begin{equation*}
\begin{split}
 |u^{p-2}u\mu|(B(x_0,r)) & = \int_{B(x_0,r)}|u|^{p-1}\, d\mu 
\leq \norm{u^{p-1}}_{\infty,B(x_0,r)}\, \mu(B(x_0,r)) \\
& \leq C r^{q(n-p)/p} = C r^{n-p + \alpha(p-1)},
\end{split}
\end{equation*}
where $C= C(n,p,q,\lambda,u)$. The claim now follows from Theorem
\ref{thm:TeroXiao}.
\end{proof}

\section{Simplicity of the first eigenvalue} \label{sec:first}
\begin{theorem}\label{merkki}
The first $(p,\mu)$-eigenfunction $u_1$ does not change sign.
\end{theorem}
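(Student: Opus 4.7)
The plan is to split $u_1$ into its positive and negative parts, show that each nontrivial part is itself a first eigenfunction (hence a nonnegative $p$-superharmonic function), and then invoke the strong minimum principle to force one part to vanish. Denote by $\lambda_1$ the first $(p,\mu)$-eigenvalue.

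First, since truncation preserves $W^{1,p}_0(\O)$, both $u_1^+$ and $u_1^-$ are admissible test functions in \eqref{eq:eq_weak}. Testing with $\vp = u_1^+$ and using $\nabla u_1^+ = \chi_{\{u_1>0\}}\nabla u_1$ (and analogously with $\vp = -u_1^-$) yields
\[
 \int_\O |\nabla u_1^{\pm}|^p\,dx = \lambda_1 \int_\O (u_1^{\pm})^p\,d\mu.
\]
Hence whenever $u_1^{\pm}\not\equiv 0$, the Rayleigh quotient $R(u_1^{\pm})$ equals $\lambda_1$, so $u_1^{\pm}$ minimizes $R$ and therefore, by the Euler--Lagrange argument from the proof of the previous existence theorem, is itself a first $(p,\mu)$-eigenfunction. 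In particular $u_1^+$ satisfies
\[
 \int_\O |\nabla u_1^+|^{p-2}\nabla u_1^+\cdot\nabla\vp\,dx
 = \lambda_1 \int_\O (u_1^+)^{p-1}\vp\,d\mu
\]
for every $\vp \in \cao$, whose right hand side is nonnegative for $\vp \geq 0$ because $\lambda_1 > 0$ and $\mu$ is a positive measure. Thus $u_1^+$ is a nonnegative weak supersolution of $-\Delta_p w = 0$ in $\O$, and the same holds for $u_1^-$.

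The next step is to invoke the strong minimum principle for nonnegative $p$-superharmonic functions, a classical consequence of Trudinger's weak Harnack inequality (see e.g.\ \cite[Ch.~7]{HKM:book}): on each connected component of $\O$, such a function is either identically zero or strictly positive. Applying this to $u_1^+$ and $u_1^-$ and using the pointwise identity $u_1^+(x)u_1^-(x)=0$, I conclude that at least one of $u_1^\pm$ must vanish on each connected component of $\O$. Since $u_1\not\equiv 0$, this yields the asserted sign-definiteness.

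The step I expect to be the main obstacle is justifying the strong minimum principle in this generality. Because only the one-sided supersolution inequality is used and the right hand side measure $(u_1^+)^{p-1}\mu$ enters merely through its nonnegativity, the classical $p$-Laplace theory applies without any adjustment for the measure $\mu$; the more delicate two-sided estimates used in Section \ref{sec:continuity} are not needed here.
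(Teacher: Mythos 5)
Your proof is correct but takes a genuinely different route from the paper's, though both rest on the same two pillars: the Rayleigh-quotient characterization of $\lambda_1$, and the strong minimum principle (via the weak Harnack inequality) for nonnegative supersolutions of $-\Delta_p w = 0$. The paper works with $|u_1|$ rather than with $u_1^\pm$: since $R(|u_1|) = R(u_1) = \lambda_1$, the function $|u_1|$ is itself a first eigenfunction and hence a nonnegative supersolution, so $|u_1| \ge \delta > 0$ on any ball $B \subset \O$; then $\operatorname{sgn}(u_1) \in W^{1,p}(B)$ takes only the values $\pm 1$, so the ACL characterization gives $\nabla(\operatorname{sgn} u_1) = 0$ a.e., and Poincar\'e forces $\operatorname{sgn}(u_1)$ to be constant on $B$. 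Your $u_1^\pm$-decomposition reaches the same conclusion more directly: the pointwise disjointness $u_1^+ u_1^- = 0$ together with the all-or-nothing dichotomy from the strong minimum principle annihilates one of the two parts outright, dispensing with the sign-function and ACL machinery. One small gap worth closing: you should check that $\int_\O (u_1^\pm)^p\,d\mu > 0$ whenever $u_1^\pm \not\equiv 0$ --- if this $\mu$-integral vanished, your tested identity would force $\nabla u_1^\pm = 0$ and hence $u_1^\pm = 0$ by Poincar\'e --- so that $R(u_1^\pm)$ is actually well defined and equal to $\lambda_1$. Both arguments, like the theorem itself, implicitly take $\O$ connected.
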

\begin{proof}
We notice that $|u_1|$ is a minimizer of the Rayleigh quotient and
therefore 
also $|u_1|$ is a $(p,\mu)$-eigenfunction. 
By definition we have
$$
 \int_\O |\nabla u_1|^{p-2}\nabla |u_1| \cdot \nabla \vf\, dx
= \int_\O  |u_1|^{p-1} \vf\, d\mu \geq 0
$$
for all test functions $\varphi$, hence $|u_1|$ is a supersolution to
the $p$-Laplace equation. By the weak Harnack inequality $|u_1|$ is locally
bounded away from zero. Let $B \subset \Omega$ be a ball; we have
$|u_1| \ge \delta > 0$ in $B$. By the lattice property of Sobolev spaces 
the function $v = \operatorname{sgn} (u_1)$ belongs to $W^{1,p}(B)$.
Further, since functions in $W^{1,p}(B)$ have representatives that are
absolutely continuous on almost every line parallel to the coordinate axes,
we obtain $\nabla v = 0$ a.e. in $B$. From the Poincaré inequality we
infer that $v = v_B$, so $v$ cannot change sign in $B$. Therefore $u_1$
cannot change sign in $B$, and since $B \subset \O$ was arbitrary, the proof is completed.
\end{proof}

\begin{theorem}
\label{the:sign}
The first $(p,\mu)$-eigenvalue is simple.
\end{theorem}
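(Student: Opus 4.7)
The plan is to transplant the classical Lindqvist hidden-convexity proof of simplicity for the ordinary $p$-Laplacian into the present measure-theoretic setting; nothing essential changes, because the measure enters the Rayleigh quotient only through a quantity that is linear along the interpolation curve used below.

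I would begin by fixing two first eigenfunctions $u,v\in W_0^{1,p}(\O)$ associated with $\lambda_1$. By Theorem~\ref{merkki} each has constant sign, so I may take $u,v\ge 0$, and the weak Harnack step used in the proof of Theorem~\ref{merkki} further lets me assume $u,v>0$ in $\O$. After rescaling, impose the normalization $\int_\O u^p\,d\mu=\int_\O v^p\,d\mu=1$. For $t\in(0,1)$ I would introduce the curve
\[
\eta_t=\bigl((1-t)u^p+tv^p\bigr)^{1/p},
\]
verify that $\eta_t\in W_0^{1,p}(\O)$ via a standard chain-rule argument, and note the crucial linearity $\int_\O\eta_t^p\,d\mu=1$.

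The heart of the argument is the pointwise hidden-convexity inequality
\[
|\nabla\eta_t|^p\le (1-t)|\nabla u|^p+t|\nabla v|^p\quad\text{a.e.\ in }\O,
\]
with equality if and only if $\nabla\log u=\nabla\log v$ a.e.\ on $\{u,v>0\}$. To see this, rewrite $\nabla\eta_t$ as the convex combination
\[
\nabla\eta_t=(1-t)(u/\eta_t)^{p}\,\eta_t\nabla\log u + t(v/\eta_t)^{p}\,\eta_t\nabla\log v,
\]
whose coefficients sum to one, and invoke strict convexity of $\xi\mapsto|\xi|^p$ for $p>1$. Integrating and using that $u,v$ realize the Rayleigh minimum $\lambda_1$ yields $\int_\O|\nabla\eta_t|^p\,dx\le\lambda_1$; conversely, $\eta_t\in W_0^{1,p}(\O)$ with $\int_\O\eta_t^p\,d\mu=1$ forces $\int_\O|\nabla\eta_t|^p\,dx\ge\lambda_1$, so equality holds throughout. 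The equality case then delivers $\nabla\log(u/v)=0$ a.e., and the lattice-plus-Poincar\'e argument from the proof of Theorem~\ref{merkki} combined with the normalization gives $u=v$.

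The main obstacle is making rigorous the chain-rule identity and the equality characterization on the set $\{u=0\}\cup\{v=0\}$, where $\log u$ and $\log v$ blow up. I would handle this by the standard regularization of replacing $u,v$ with $u+\eps,v+\eps$ throughout, running the argument in the strictly positive regime, and passing to the limit $\eps\to 0^+$ via dominated convergence, with the embedding \eqref{eq:ehto} ensuring that all integrals are uniformly controlled.
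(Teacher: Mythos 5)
Your proposal is essentially the paper's own argument: the paper uses the Belloni--Kawohl direct uniqueness proof, which is precisely the hidden-convexity device you invoke, with $w=\bigl((u^p+v^p)/2\bigr)^{1/p}$ (i.e.\ your $\eta_t$ at $t=1/2$), the same pointwise convexity inequality for $|\nabla w|^p$, and the same strict-convexity equality case yielding $\nabla(u/v)=0$ a.e. The regularization step you flag is not needed here, since Theorem~\ref{merkki} (via the weak Harnack inequality) already gives strict positivity of the first eigenfunctions in $\O$.
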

\begin{proof}
We use a method devised by Belloni and Kawohl \cite{BK}.
By homogeneity we can assume that $\int_\O |u|^p\, d\mu = 1$
for the first eigenfunction, so for the first eigenvalue $\lambda_1$ we have
\begin{equation} \label{eq:rayleigh}
\lambda_1
= \inf \l\{\int_\O |\nabla v |^p \, dx \colon  v\in 
W^{1,p}(\O),\ \norm v_{L^p(\O,\mu)}= 1  \r\}.
\end{equation}
Assume there are two minimizers  $u, v$ of  \eqref{eq:rayleigh}. The minimizers
can be assumed positive by Theorem \ref{merkki}. The function $w= \eta^{1/p}$ with $\eta= (u^p + v^p)/2$
is also admissible in \eqref{eq:rayleigh},
because 
$$
\int_\O |w|^p \, d\mu 
= \frac{1}{2} \l(\int_\O u^p + v^p \, d\mu \r)= 1.
$$
Now 
$\nabla w = \frac{1}{2}\eta^{-1+1/p}\, (u^{p-1}\nabla u + v^{p-1}\nabla v)$,
and by convexity we obtain, denoting $s= u^p(u^p + v^p)^{-1}\in (0,1)$,
\begin{equation}
\label{eq:belloni}
\begin{split}
|\nabla w|^p
&= \eta^{1-p} \l| \frac{1}{2}(u^{p-1}\nabla u + v^{p-1}\nabla v)\r|^p\\
&= \eta \l| \frac{1}{2} 
    \l( \frac{u^p}{\eta}\frac{\nabla u}{u} + \frac{v^p}{\eta}\frac{\nabla v}{v} 
    \r) \r|^p\\
&= \eta \l| 
         s\frac{\nabla u}{u} + (1-s)\frac{\nabla v}{v}
        \r|^p \\
&\leq \eta \l( 
         s\l|\frac{\nabla u}{u}\r|^p + (1-s)\l|\frac{\nabla v}{v}\r|^p
           \r)\\
&= \frac{1}{2} \l( 
         u^p\l|\frac{\nabla u}{u}\r|^p + v^p\l|\frac{\nabla v}{v}\r|^p
           \r)
= \frac{1}{2} \l(|\nabla u|^p + |\nabla v|^p \r).
\end{split}
\end{equation}
Hence 
\begin{equation}  \label{eq:mini}
\int_\O |\nabla w|^p \,dx
\leq \frac{1}{2} \l(\int_\O|\nabla u|^p \, dx + \int_\O |\nabla v|^p\, dx \r).
\end{equation}
Because $u$ and $v$ are both minimizers of \eqref{eq:rayleigh}, equality
holds in \eqref{eq:mini}, and therefore also in \eqref{eq:belloni} for
a.e. $x\in\O$.  Since the function $|\cdot|^p$ is strictly convex, we must have
\[
\frac{\nabla u}{u} = \frac{\nabla v}{v} \ \text{ a.e,}
\]
i.e.
\[
u\nabla v - v \nabla u = 0 \ \text{ a.e,}
\]
i.e. ($v \ne 0$ by definition)
\[
\nabla\left(\frac{u}{v}\right) = 0 \ \text{ a.e.}
\]
Therefore $u(x) = \text{const}\cdot v(x)$ for a.e. $x\in \O$.
\end{proof}

\section{A counterexample in the case $p=n$}
\label{sec:n}
\begin{theorem}
When $p=n$, there exists a measure $\mu$ on $\O$ satisfying \eqref{eq:ehto}
such that any first $(n,\mu)$-eigenfunction is not $\alpha$-Hölder continuous
for any $\alpha \in (0,1)$.
\end{theorem}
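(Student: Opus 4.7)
The plan is to exhibit $\mu$ by reverse-engineering it from an explicit unbounded $u \in W^{1,n}_0(\O)$. Take $\O = B(0,1)\subset\R^n$ and let $u$ be radial: $u(|x|) = \log\log(A/|x|)+c$ on an inner ball $B(0,r_1)$, matched $C^1$ to the $n$-harmonic profile $-\log|x|/\log(A/r_1)$ on the annulus $r_1<|x|<1$. The parameters $A>1$, $r_1\in(0,1)$, and $c$ are chosen so that $u\in W^{1,n}_0(\O)$ (integrability of $|\nabla u|^n$ is a direct polar-coordinate calculation), $u>0$ on $\O\setminus\{0\}$, $u\to\infty$ at the origin, and $u=0$ on $\partial\O$. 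A direct computation gives $-\Delta_n u = (n-1)/(|x|^n\log^n(A/|x|))$ on $B(0,r_1)$ and $-\Delta_n u = 0$ on the outer annulus.

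Define $\mu$ to be absolutely continuous with respect to Lebesgue measure, with density $\rho = -\Delta_n u /(\lambda\, u^{n-1})$, where $\lambda>0$ is fixed so that $\int u^n d\mu = 1$. The substitution $s = \log(A/|x|)$ reduces $\int_\O \rho\,dx$ to a convergent integral of the form $\int_{s_0}^\infty ds/(s^n(\log s+c)^{n-1})$, so $\mu$ is a positive finite Borel measure. By construction $u$ solves \eqref{eq} with eigenvalue $\lambda$. Positivity of $u$, combined with a Picone-type argument (standard in the $p$-Laplace theory, cf.\ \cite{KL} and references therein), forces $\lambda=\lambda_1$; by simplicity (Theorem \ref{the:sign}) every first eigenfunction is then a scalar multiple of $u$, and so inherits the $\log\log$-type blow-up at the origin; in particular every first eigenfunction is unbounded, hence not $\alpha$-Hölder continuous for any $\alpha\in(0,1)$.

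The main obstacle is the verification of the embedding \eqref{eq:ehto} itself. When $p=n$, the growth exponent $q(n-p)/p$ in Lemma \ref{the:density} collapses to $0$ and the converse direction of that lemma, which rests on Adams' inequality, must be replaced by a $p=n$ (Trudinger-Moser) analogue adapted to the specific density $\rho\sim |x|^{-n}\log^{-n}(A/|x|)(\log\log(A/|x|))^{-(n-1)}$. One can either derive a weighted Hardy-Sobolev estimate giving $\|\vf\|_{L^q(\O,\mu)}\le C\|\nabla\vf\|_{L^n(\O)}$ directly for some $q>n$ (the $(\log\log)^{-(n-1)}$ factor in $\rho$ providing just enough slack at the critical exponent), or, if necessary, slightly thin out $\rho$ by replacing $\log\log(A/|x|)$ in the definition of $u$ by $(\log\log(A/|x|))^\beta$ for a small $\beta>0$, which preserves the unboundedness of $u$ and still yields a valid counterexample.
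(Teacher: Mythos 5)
Your strategy---reverse-engineer $\mu$ from an explicitly unbounded radial $u$---is different from the paper's, and unfortunately it cannot work, because the measure you construct does not satisfy the embedding \eqref{eq:ehto}. You correctly flag the embedding as the main obstacle, but the obstacle is fatal and neither of your proposed fixes addresses it. The paper instead works by contradiction: it builds a Cantor-type set $E$ and a measure on $E$ that saturates the $p=n$ version of the growth condition (\eqref{eq:n}, $\mu(B(x,r))\le C|\log r|^{-q(n-1)/n}$) but, at every point of $E$, achieves $\mu(B(x_0,r))\ge c\,|\log r|^{-q(n-1)/n}$; combined with the reverse implication from \cite[Rmk.~2.7]{Kilp} and positivity of the first eigenfunction (Theorem \ref{merkki}), this rules out H\"older continuity \emph{without} ever exhibiting an unbounded eigenfunction.

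Here is why your construction fails and cannot be patched within the same framework. For $u=\log\log(A/|x|)+c$ one computes, exactly as you state, $-\Delta_n u=(n-1)|x|^{-n}\log^{-n}(A/|x|)$ on the inner ball, and so
\[
\rho(x)=\frac{-\Delta_n u}{\lambda\,u^{n-1}}
\sim\frac{C}{|x|^{n}\,\log^{n}(A/|x|)\bigl(\log\log(A/|x|)\bigr)^{n-1}},
\qquad
\mu(B(0,r))\sim\frac{C}{|\log r|^{\,n-1}\bigl(\log|\log r|\bigr)^{\,n-1}}.
\]
By \cite[Cor.~11.8.1]{Mazja} the embedding with exponent $q>n$ requires $\mu(B(x,r))\le C|\log r|^{-q(n-1)/n}$, and since $q(n-1)/n>n-1$ strictly, the required decay is a genuine power of $|\log r|$ faster than $|\log r|^{-(n-1)}$; the extra $(\log|\log r|)^{-(n-1)}$ factor in your $\mu$ cannot compensate. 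Your fallback of replacing $\log\log(A/|x|)$ by $(\log\log(A/|x|))^\beta$ does not help: both $-\Delta_n u$ and $u^{n-1}$ acquire the same $\beta$-dependent power of $\log\log$, which cancels in $\rho$, leaving the asymptotics of $\mu(B(0,r))$ unchanged. The same cancellation occurs if you try $u=(\log(A/|x|))^\gamma$ with $\gamma<(n-1)/n$: one finds $\rho\sim C/(|x|^n\log^n(A/|x|))$ and $\mu(B(0,r))\sim C|\log r|^{-(n-1)}$, again the wrong power. This is not an accident: if $\mu$ does satisfy \eqref{eq:ehto} with some $q>n$, the resulting growth $\mu(B(x,t))\lesssim|\log t|^{-s}$ with $s>n-1$ makes the Wolff potential $\int_0^r\mu(B(x,t))^{1/(n-1)}\,dt/t$ finite at every point, which forces solutions of $-\Delta_n u=\lambda|u|^{n-2}u\,\mu$ to be locally bounded. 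So no admissible measure can produce an \emph{unbounded} eigenfunction; the obstruction to H\"older continuity in the $p=n$ case is more subtle than blow-up, and an approach based on exhibiting an explicit unbounded $u$ cannot succeed.
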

\begin{proof}
Recall first from \cite[Cor. 11.8.1]{Mazja}
that when $p=n$, the embedding \eqref{eq:ehto} is equivalent to
\begin{equation}\label{eq:n}
\mu(B(x,r)) \leq C |\log r|^{-q(n-1)/n}
\end{equation}
for all $x \in \O$ and all small radii $r$.
On the other hand, assuming that an $\alpha$-Hölder continuous first
$(n,\mu)$-eigenfunction $u$ exists for some measure $\mu$ and for some
$\alpha \in (0,1)$, it follows from 
\cite[Rmk. 2.7]{Kilp} that 
\begin{equation*}
  |u|^{n-1}\mu(B(x,r)) \le C r^{\alpha(n-1)}
\end{equation*}
for all $x \in \O$ and all small radii $r$. Moreover, this first eigenfunction
satisfies $|u|>0$ in $\O$ by Theorem \ref{merkki}, so for any nonempty
$E \subset \O$ there exists a point $x_0 \in E$ and a small radius $r$
such that $|u| > \delta > 0$ in $B(x_0,r)$. Hence the growth condition
\begin{equation}\label{n_kasvu}
  \mu(B(x_0,r)) \le C r^{\alpha(n-1)}
\end{equation}
should hold for all small radii.

We reach a contradiction by constructing a set $E \subset \O$
and a measure
$\mu$ supported on $E$ such that \eqref{eq:n} holds for all points
$x \in \Omega$, while for all $x \in E$ there exist arbitrarily small
radii $r$ for which the growth condition \eqref{n_kasvu}
fails for each $\alpha \in (0,1)$.

Let $h(r) = \abs{\log r}^{-q(n-1)/n}$, and choose $r_0 > 0$
such that $B^0_1 = \overline{B}(x_0, r_0/2) \subset \O$. Define $r_k$ inductively by
\[
 h(r_{k+1}) = \frac{1}{2}h(r_k).
\]
Choose closed balls $B^1_1, B^1_2 \subset B^0_1$ having empty intersection
and diameter $r_1$. Similarly at stage $k$ choose inside each ball
$B^k_1,\ldots,B^k_{2^k}$ two non-overlapping balls having diameter $r_{k+1}$.

Let $E = \bigcap_k \bigcup_j B^k_j$.
Define a set function $\widetilde{\mu}$ on the collection 
$\mathcal{B} = \{B^k_j\}$ by
$\widetilde{\mu}(B^k_j) = h(r_k)$, and define a measure $\mu$ on $\Omega$ by
\[
 \mu(A) = \inf\left\{\sum_{i=1}^\infty \widetilde{\mu}(B_i) \, \colon \, A \cap E \subset
\bigcup_{i=1}^\infty B_i,\  B_i \in \mathcal{B} \right\}.
\]
Then $\mu$ satisfies
\[
 \mu(B(x,r)) \le Ch(r)
\]
for each $x \in \O$ and each $0 < r < r_0$, and also
\[
 \mu(B(x,r)) \ge ch(r)
\]
for each $x \in E$ and each $0 < r < r_0$.
\end{proof}

\section{Conclusions}
\label{sec:conclusions}
We introduced a way to define the $p$-Laplace operator with respect to
measures satisfying an Adams type embedding condition. The definition provided
unique solutions to Poisson problems with zero boundary data, and the class
of admissible measures included natural measures for self-similar sets whose
Hausdorff dimension depended on the embedding parameters.
The main part of our analysis was devoted to the $(p,\mu)$-eigenvalue problem.
We proved positivity of eigenvalues, existence of the first eigenfunction, and
a priori H\"older continuity of eigenfunctions. We also showed that the
first eigenvalue is simple and that the first eigenfunction does not change
sign. Finally we noted that the case $p=n$ does not necessarily yield
Hölder continuous eigenfunctions.

An open problem that we find particularly interesting is to find a sharp
embedding condition for the measure that guarantees continuity of
eigenfunctions in the case $p=n$.
Our treatment also leaves room for some generalizations. For example, what
can be said when the boundary values are nonzero?
One might also consider more general nonlinear operators with $p$-growth
in the measure setting.

While this article was under review, the authors were informed about
articles \cite{BelRou} and \cite{FranLam} where related 
problems are studied.

\section*{Acknowledgements} We thank Tero Kilpeläinen,
Juha Kinnunen and Ville Suomala for helpful discussions, and Juan Manfredi
for informing us about the article \cite{Hu}. We also thank
the anonymous referee for many helpful and clarifying remarks.

\appendix

\section{The natural measure of a self-similar set} \label{sec:fractals}

In this appendix we recall (from \cite{Falc1}, \cite{Falc2}) the definition
of the natural measure for a self-similar set, used in the proof of
Theorem \ref{thm:hausdorff}.

A function $f:\O\to \O$ is a \emph{similarity transformation} 
with ratio $r$ if $|f(x)-f(y)| = r|x-y|$ for all $x,y \in \O$ 
and for some $0 < r < 1$. A \emph{self-similar set} related to finitely many
similarities
$f_1, \dots, f_N$ is the unique non-empty compact set $E \subset \O$ such that
$$
E = f_1(E)\cup \dots \cup f_N(E).
$$

A self-similar set $E \subset \O$ supports \emph{self-similar measures}:
For each similarity $f_i$ assign a probability $p_i$, $0 \leq  p_1 \leq 1$ ,
such that $\sum_{i=1}^N p_i= 1$. Then there exists a unique Borel probability
measure $\mu$ on $\O$ such that
$$
\mu(A)= \sum_{i=1}^N p_i \mu(f_i^{-1}(A))
$$
for all Borel sets $A \subset \O$, see \cite[Thm. 2.8]{Falc2}.

A self-similar set $E \subset \O$ related to similarities $f_1,\ldots,f_N$
satisfies the \emph{open set condition} if
there exists a non-empty open set $U \subset \O$ such that 
\begin{equation}\label{eq:open}
  \bigcup_{i=1}^N f_i(U)\subset U \quad
\text{  and  }\quad f_i(U)\cap f_j(U)= \emptyset \text{ for } i \not= j.
\end{equation}

If a self-similar set $E \subset \O$ satisfying the open set condition 
is the invariant set of similarities $f_i$ with ratios $r_i$,
then $E$ has Hausdorff dimension $s$ satisfying
$$
\sum_{i=1}^N r_i^s = 1
$$
and further $E$ has positive and finite $\mathcal{H}^s$-measure.
Moreover, the self-similar measure $\mu_E$ with probabilities
$p_i = r_i^s$ is evenly destributed and there is a constant $C$ such that
\begin{equation}\label{eq:natural}
 \mu_E(B(x,r)) \leq C r^s
\end{equation}
for all balls $B(x,r)\subset \O$. See \cite[Thm. 9.3 ]{Falc1}. 
We call the measure $\mu_E$  the \emph{natural measure} on $E$.

\end{document}